\numberwithin{equation}{section}
\newcommand{\Z}{\mathbb{Z}}
\newcommand{\N}{\mathbb{N}}
\newcommand{\R}{\mathbb{R}}
\newcommand{\eps}{\varepsilon}
\DeclareMathOperator{\supp}{supp} 
\DeclareMathOperator{\beqq}{\begin{equation}} 
\DeclareMathOperator{\eeqq}{\end{equation}}
\renewcommand{\phi}{\varphi}
\newcommand{\beq}{\begin{equation}}
\newcommand{\eeq}{\end{equation}}
\newtheorem{Thm}{Theorem}[section]
\newtheorem{theorem}[Thm]{Theorem}
\newtheorem{corollary}[Thm]{Corollary}
\newtheorem{definition}{Definition}
\begin{document}

\sloppy
\title[Gabor frames for functions supported on a semi-axis]
{Gabor frames for functions supported on a semi-axis}

\author{Yurii Belov}
\address{Yurii Belov,
\newline Department of Mathematics and Computer Science
St. Petersburg State University,
14th Line 29b, Vasilyevsky Island, St. Petersburg, Russia, 199178,
\newline {\tt j\_b\_juri\_belov@mail.ru} }
\author{Aleksei Kulikov}
\address{Aleksei Kulikov,
\newline University of Copenhagen, Department of Mathematical Sciences,
Universitetsparken 5, 2100 Copenhagen, Denmark,
\newline {\tt lyosha.kulikov@mail.ru} 
}

\thanks{}

\begin{abstract} 
Let $g\in L^2(\mathbb{R})$ be a strictly decreasing continuous function supported on $\mathbb{R}_+$ such that for all $t > 0$ we have $g(x+t)\le q(t)g(x)$ for some $q(t)<1$. We prove that the Gabor system  
$$\mathcal{G}(g;\alpha,\beta):=\{e^{2\pi i \beta m x}g(x-\alpha n)\}_{m,n\in\mathbb{Z}}$$
always forms a frame in $L^2(\mathbb{R})$ for all lattice parameters $\alpha$,$\beta$, $\alpha\beta\leq 1$.
\end{abstract}

\maketitle
\section{Introduction}
For a function $g\in L^2(\R)$ and $t, \omega\in\R$ we define its time-frequency shift $\pi_{t,\omega}g$ by
$$\pi_{t,\omega}g(x)=e^{2\pi i \omega x}g(x-t).$$
For numbers $\alpha,\beta>0$ we consider the Gabor system
 $$\mathcal{G}(g;\alpha,\beta):=\{\pi_{\alpha m, \beta n }g\}_{m,n\in\mathbb{Z}}=\left\{e^{2\pi i \beta m x}g(x-\alpha n)\right\}_{m,n\in\mathbb{Z}}$$
and ask when it is a frame in $L^2(\R)$, that is when there are constants $A, B > 0$ such that for all $h\in L^2(\R)$ we have
\begin{equation}\label{frameform}
A\|h\|^2\leq\sum_{m,n\in\mathbb{Z}}|(h,\pi_{\alpha m, \beta n}g)|^2\leq B\|h\|^2.    
\end{equation}

The problem of classifying the frame set, that is the set of pairs $\alpha, \beta > 0$ such that $\mathcal{G}(g;\alpha,\beta)$ is a frame, is well studied \cite{BC, Gro, Heil, Jans5, RonShen, GroTP}. Despite numerous efforts, the full description is known only for a few specific functions: Gaussian, one-sided and two-sided exponential functions, hyperbolic secant, Haar function, shifted sinc-fucntion, characteristic function of an interval and three classes of functions: totally positive functions of finite type, Gaussian totally positive functions of finite type and Herglotz rational functions, see \cite{BB, BKL, BS, DS, DZ, Jans2, Jans3, JansStr, Gro1, Gro2, L, S, SW}. 

In this note we find a new and very general class of functions for which we can completely describe the frame set. This set first of all can not be parametrized by finitely many variables, which was the case for the results in \cite{BKL, Gro1, Gro2}, and second of all it completely recovers the previous results  \cite[Theorem 1.1, Theorem 1.10]{BKL},  \cite[Theorem 1.3]{BS}, \cite[Theorem C, p. 170] {Jans2} with a simple and streamlined proof. To state it we would need the following definition. 
\smallskip
\begin{definition} The function $g:[0, \infty)\to [0, \infty)$ will be called stably decreasing if for any $t>0$ there exists $0<q(t)<1$ such that
\begin{equation}
g(x)q(t)\ge g(x+t), \quad x\geq 0.
\label{qineq}
\end{equation}
\end{definition}

It is known that if $\alpha\beta>1$, then Gabor system $\mathcal{G}(g;\alpha,\beta)$ is not a frame, see \cite[Corollary 7.5.1]{Gro}. So, we will focus on the case $\alpha\beta\leq1$. Our main result is the following theorem.

\begin{theorem} Let $g\in L^2(\mathbb{R})$, $g(x)=0$ for $x<0$ and $g$ is stably decreasing. Let $x_0$ be the supremum of $x \ge 0$ such that $g(x) > 0$.
\begin{enumerate}
\item If $x_0 = \infty$ then the Gabor system $\mathcal{G}(g; \alpha, \beta)$ is a frame if and only if $\alpha\beta \le 1$.
\item If $x_0 < \infty$ and $\lim_{x\to x_0-}g(x) > 0$ then the Gabor system $\mathcal{G}(g; \alpha, \beta)$ is a frame if and only if $\alpha\beta \le 1$ and $\alpha \le x_0$.
\item If $x_0 < \infty$ and $\lim_{x\to x_0-}g(x) = 0$ then the Gabor system $\mathcal{G}(g; \alpha, \beta)$ is a frame if and only if $\alpha\beta \le 1$ and $\alpha < x_0$.
\end{enumerate}
\label{mainth}
\end{theorem}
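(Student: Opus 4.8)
The necessity directions are quick, so I would dispose of them first. The upper frame bound always holds because a stably decreasing $g$ is bounded and in $L^2$, in fact in the Wiener amalgam space $W(L^\infty,\ell^1)$ (the geometric decay $g(x+t)\le q(t)g(x)$ forces exponential decay, so $\sum_n \sup_{x\in[0,1]}|g(x-\alpha n)|^2<\infty$), which gives the Bessel bound; and $\alpha\beta\le 1$ is forced by the density theorem \cite[Corollary 7.5.1]{Gro}. For the remaining necessity claims in (ii) and (iii): if $g$ is supported in $[0,x_0]$ and $\alpha>x_0$ (resp.\ $\alpha\ge x_0$ in case (iii)), then the translates $g(x-\alpha n)$ have supports that miss an interval of positive length (resp.\ all of a point's neighborhood, after accounting for the vanishing at $x_0$), so a function $h$ supported there is orthogonal to the whole system and the lower bound fails. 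The boundary case $\alpha=x_0$ in (ii) versus (iii) is exactly the distinction between the supports $[\alpha n, \alpha n+x_0]$ tiling $\mathbb R$ with overlaps only at endpoints, where a single point carries no $L^2$ mass, so the system can still be complete when $\lim_{x\to x_0-}g(x)>0$.

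For sufficiency — the substantive part — I would use the standard reduction to a lower bound on a single ``fiber.'' By a Zak-transform / periodization argument (or directly), it suffices to handle $\alpha\beta=1$, since enlarging the lattice to $\alpha\beta<1$ only adds vectors and hence only helps the lower bound; and after rescaling we may take $\beta=1/\alpha$. Then \eqref{frameform} reduces, via the fact that $\{e^{2\pi i n x/\alpha}\}_n$ is an orthogonal basis of $L^2[0,\alpha]$, to showing that the operator given by the $\alpha$-periodization of $|g|^2$ along the shifts — more precisely the analysis operator restricted to each coset — is bounded below. Concretely one wants: there is $A>0$ so that for a.e.\ $x$, the sequence $(g(x-\alpha n))_{n\in\mathbb Z}$, viewed through the relevant Gram-type operator on $\ell^2$, has norm controlled below by $A$. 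This is where stable decrease does the work: on the overlap region the ratios $g(x-\alpha n)/g(x-\alpha(n+1))$ are bounded between $1$ and $1/q(\alpha)<\infty$, so the ``profile'' of the fiber is comparable to a geometric sequence, and geometric sequences are easily seen to give a uniformly invertible problem. I expect the cleanest route is to show directly that the frame operator $S=\sum_{m,n}\pi_{\alpha m,n/\alpha}g\otimes\overline{\pi_{\alpha m,n/\alpha}g}$ is, on the Zak side, multiplication by a function bounded below, and that this function is $\gtrsim \sum_n |g(x-\alpha n)|^2 \gtrsim \inf_x \sum_n |g(x-\alpha n)|^2$, which is positive precisely under the stated support hypotheses (here one uses continuity and strict positivity of $g$ on $(0,x_0)$ together with stable decrease to rule out the infimum being $0$).

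The main obstacle, and the place I would spend the most care, is upgrading ``the periodized square $\sum_n|g(x-\alpha n)|^2$ is bounded below'' into an honest lower frame bound at the critical density $\alpha\beta=1$. At $\alpha\beta=1$ the Gabor system is (at best) a Riesz basis, so there is no slack: one genuinely needs invertibility of the Zak-transform multiplier, not merely its nonvanishing in an averaged sense, and one must confront the possibility that $Zg$ has zeros. The device that saves this is precisely one-sided support plus stable decrease: I would argue that for functions supported on a half-line with geometrically controlled decay, the relevant fiber operator is (uniformly in the coset) a small perturbation — in fact a triangular, diagonally dominant perturbation — of an invertible model operator built from the geometric sequence $(q(\alpha)^n)_{n\ge 0}$, so its inverse is bounded independently of the coset. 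This diagonal-dominance estimate, exploiting that $g$ only ``spreads to the right,'' is the technical heart; everything else is bookkeeping. Once the fiberwise lower bound is in hand, integrating over the coset gives $A\|h\|^2\le \sum_{m,n}|(h,\pi_{\alpha m,n/\alpha}g)|^2$, and passing from $\alpha\beta=1$ to $\alpha\beta<1$ by monotonicity completes the proof.
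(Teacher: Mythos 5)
Your core mechanism for the lower bound --- exploiting one-sided support plus stable decrease to turn the fiber operator into a triangular, diagonally dominant perturbation of a geometric model, with uniformly bounded inverse --- is exactly the engine of the paper's proof (there it appears as multiplying a row-selected upper-triangular Ron--Shen matrix $G^{\star}_x$ by $U=\mathrm{Id}-q(\tfrac1\beta)S$ and invoking a diagonal-dominance lemma). But your reduction of the whole theorem to the critical case $\alpha\beta=1$ is a genuine gap. The claim that ``enlarging the lattice to $\alpha\beta<1$ only adds vectors'' is only valid when there is an actual inclusion of index sets, i.e.\ when $\alpha\Z\times\tfrac1\alpha\Z\subset\alpha\Z\times\beta\Z$, which forces $\tfrac{1}{\alpha\beta}\in\N$. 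For generic $\alpha\beta<1$ no critical-density sublattice with the same window is contained in $\alpha\Z\times\beta\Z$, and the frame property is famously \emph{not} monotone in the lattice parameters without such inclusion (the frame set of $\chi_{[0,1]}$, cited in the paper, is the standard warning). Since the entire content of the theorem is that \emph{every} subcritical pair works, an argument that only covers $\alpha\beta=1$ (plus the integer-ratio points) does not prove the statement. The paper avoids this by running the fiber argument directly for arbitrary $\alpha\beta\le1$: in the Ron--Shen matrix $G_x=\{g(x-\alpha n+\tfrac m\beta)\}$ one uses $\alpha\le\tfrac1\beta$ to select, for each column, one row whose leading entry is $g(\tau)$ with $\tau\in[0,\alpha)$; discarding the other rows only hurts the lower bound, and the surviving matrix is upper-triangular and amenable to your dominance argument. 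Your Zak-transform picture, by contrast, only diagonalizes the frame operator at critical density, so it cannot substitute for this step.

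Two smaller points. First, the intermediate claim that the Zak multiplier is $\gtrsim\sum_n|g(x-\alpha n)|^2$ is not a legitimate step: a lower bound on the periodization of $|g|^2$ never by itself bounds $|Zg|$ below (cancellation can kill $Zg$); the bound must come from the triangular/dominance argument you mention afterwards, so the logic should be reordered. Second, your necessity discussion for case (iii) at $\alpha=x_0$ is too vague: the translates' supports tile $\R$, nothing is ``missed,'' and incompleteness is not the failure mode; one must exhibit the degeneration of the lower frame bound, e.g.\ by testing against $\chi_{[x_0-\eps,x_0]}$ as $\eps\to0$, using $\lim_{x\to x_0-}g(x)=0$.
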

\medskip

If the function $g$ has compact support, that is if $x_0 < \infty$ then $g$ is stably decreasing if and only if it is strictly decreasing. So, in particular, we described the frame sets for all compactly supported strictly decreasing functions, e.g. $(1-x)\chi_{[0, 1]}(x)$ and $e^{-x}\chi_{[0, 1]}(x)$. Note that only assuming that $g$ is compactly supported and non-decreasing is not enough as the frame set for $\chi_{[0, 1]}(x)$ is famously very complicated \cite{DS}.

Theorem \ref{mainth} also gives a lot of new examples of functions $g$ with the maximal possible frame set. For example, we cover all Cauchy transforms of positive finite measures supported on $[a, \infty)$ for $a > 0$.
\begin{corollary} Let $\mu$ be a positive finite measure on $[a,\infty)$, $a>0$. Put 
$$g(x)=\int_{a}^\infty\frac{d\mu(t)}{x-it}.$$
Then the Gabor system $\mathcal{G}(g;\alpha,\beta)$ is a frame for all $\alpha\beta\leq1$.
\label{Fcor}
\end{corollary}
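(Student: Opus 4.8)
The plan is to deduce Corollary \ref{Fcor} directly from part (i) of Theorem \ref{mainth}. Since $\mu$ is supported on $[a, \infty)$ with $a > 0$, the function $g(x) = \int_a^\infty \frac{d\mu(t)}{x - it}$ extends holomorphically to the strip $\{|\mathrm{Im}\, z| < a\}$, and in particular it is real-analytic and nowhere vanishing on the real line (the imaginary part $\int_a^\infty \frac{t\, d\mu(t)}{x^2 + t^2}$ is strictly positive for every $x \in \RR$ whenever $\mu \ne 0$). Thus, strictly speaking, $g$ is not supported on a semi-axis. The first step is therefore to reduce to that setting: I would pass to the modulus-type majorant or, more cleanly, observe that the hypotheses of Theorem \ref{mainth} are really only used through the stable-decay property, and then verify that property for the relevant object. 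Concretely, write $|g(x)|^2 = \left(\int_a^\infty \frac{x\, d\mu(t)}{x^2+t^2}\right)^2 + \left(\int_a^\infty \frac{t\, d\mu(t)}{x^2+t^2}\right)^2$ and check $|g(x+s)| \le q(s) |g(x)|$ for $x$ large; but since we also need the full real line, the honest route is different.

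The cleaner plan is: first replace $g$ by the function $\tilde g(x) = g(x) \chi_{[0,\infty)}(x)$ only if $g$ already vanished on the negatives — which it does not — so instead I would invoke the standard fact that the frame property of a Gabor system is invariant under multiplication of $g$ by a fixed unimodular function, and more importantly that it suffices to produce \emph{one} window with the maximal frame set and compare. Actually the intended argument, I believe, is simpler: the Cauchy transform $g$ above, while not supported on $\RR_+$, has the property that its restriction behaves like a stably decreasing function at infinity, but to fit Theorem \ref{mainth} verbatim we should exhibit $g$ itself, or an associated window, as satisfying the hypotheses. Let me instead take the most robust path: show that $h(x) := \int_a^\infty e^{-xt}\,d\mu(t)$ for $x \ge 0$ and $h(x) = 0$ for $x < 0$ is stably decreasing — indeed $h(x+s) = \int_a^\infty e^{-st} e^{-xt} d\mu(t) \le e^{-as} h(x)$, so \eqref{qineq} holds with $q(s) = e^{-as} < 1$ — and that $h \in L^2(\RR)$ since $h(x) \le e^{-ax}\mu([a,\infty))$. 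Then part (i) of Theorem \ref{mainth} applies to $h$ (here $x_0 = \infty$), giving that $\mathcal G(h;\alpha,\beta)$ is a frame for all $\alpha\beta \le 1$. Finally I would connect $g$ to $h$: by the Fourier(--Laplace) relationship, $\widehat{g}$ and $h$ differ by an explicit nonvanishing factor and a reflection, and the frame property of a Gabor system for a window is equivalent to that for its Fourier transform (the Fourier transform intertwines $\pi_{t,\omega}$ with $\pi_{-\omega, t}$, hence maps the lattice $\alpha\ZZ \times \beta\ZZ$ to $\beta\ZZ \times \alpha\ZZ$, preserving $\alpha\beta$ and the frame bounds); one checks that up to such a unitary $g$ and $h$ generate the same Gabor frame set.

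The main obstacle I anticipate is precisely this last identification — making rigorous that $\mathcal G(g;\alpha,\beta)$ inherits the frame property from $\mathcal G(h;\alpha,\beta)$, i.e. pinning down the exact unitary (Fourier transform composed with the map sending the measure on the frequency side to the Laplace density on the time side) and verifying it sends one Gabor system to the other up to a bounded invertible operator commuting with all the relevant time-frequency shifts. A subtlety is that $g = \int_a^\infty (x-it)^{-1} d\mu(t)$ is, up to constants, the Fourier transform of $t \mapsto \chi_{\RR_-}(t) \int_a^\infty e^{t s} d\mu(s)$ or $\chi_{\RR_+}$ depending on sign conventions, so I must fix conventions carefully and handle the reflection $x \mapsto -x$, which swaps left- and right-supported windows but does not affect whether the system is a frame. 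Everything else — $L^2$-membership, the stable-decay estimate $q(s) = e^{-as}$, and the citation of Theorem \ref{mainth}(i) — is routine.
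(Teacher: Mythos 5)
Your proposal is correct and is essentially the paper's argument: the paper simply computes the Fourier transform directly, $\hat{g}(\xi)=\int_a^\infty e^{-2\pi \xi t}\,d\mu(t)$ for $\xi\ge 0$ and $\hat{g}(\xi)=0$ otherwise (up to the sign convention, a unimodular constant and a reflection, none of which affect the frame property), notes it is stably decreasing with $q(t)=e^{-2\pi t a}$, and applies Theorem \ref{mainth}(i) together with the standard fact that $\mathcal{G}(g;\alpha,\beta)$ is a frame if and only if $\mathcal{G}(\hat{g};\beta,\alpha)$ is. The ``main obstacle'' you flag is exactly this routine Fourier-intertwining step (plus reflection/dilation bookkeeping for your $h(x)=\int_a^\infty e^{-xt}d\mu(t)$), so there is no real gap, only an identification you sketched rather than wrote out.
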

If $\mu =\sum_{k=1}^N \mu_k\delta_{t_k}$, $\mu_k>0$, $t_k>0$ we get Theorem 1.1 from \cite{BKL} and if more generally $\mu = \sum_{k=1}^\infty \mu_k \delta_{t_k}$ for $t_k \ge a > 0$, $\mu_k> 0$, $\sum_{k=1}^\infty \mu_k < \infty$ then we cover Theorem 1.10 from \cite{BKL} as well.

The proof of Theorem \ref{mainth} is based on a Ron--Shen criterion and a modification of diagonal dominance theorem.

Our method is also applicable to the frame problem in the semi-irregular case. For a discrete set $\Gamma \subset \R^2$ we define the generalized Gabor system by
$$\mathcal{G}(g;\Gamma) := \{ \pi_{t, \omega}g\}_{(t, \omega)\in\Gamma}.$$

We will consider the sets $\Gamma$ of the form $\Lambda\times \beta \Z$ for some locally finite set $\Lambda\subset\R$. If $\Lambda$ is a $\frac{\alpha}{2}$-dense set for some $\alpha\le \frac{1}{\beta}$ then we can establish the frame property for such a system if $g$ is stably decreasing. Specifically, we prove the following theorem.

\begin{theorem} Let $\alpha, \beta > 0$ with $\alpha\beta\le1$. Let $g\in L^2(\mathbb{R})$, $g(x)=0$ for $x<0$, $g$ is stably decreasing and $\lim_{x\to \alpha^{-}} g(x) > 0$. Let $\Lambda=\{\lambda_n\}_{n\in\mathbb{Z}}\subset\mathbb{R}$ be such that for each $n\in \Z$ we have $0<\lambda_{n+1}-\lambda_n\leq \alpha$ and such that for some $m\in \N$ and all $n\in \Z$ we have $\lambda_{n+m}-\lambda_n \ge 1$. Then  the system $\mathcal{G}(g; \Lambda\times\beta\mathbb{Z})$ is a frame.
\label{irrth}
\end{theorem}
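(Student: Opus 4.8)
The plan is to follow the same strategy as for Theorem \ref{mainth}: reduce the frame property to a discrete condition via a Ron--Shen-type criterion, and then verify that condition by a diagonal dominance argument exploiting the stable decrease of $g$. First I would recall that for a system of the form $\mathcal{G}(g;\Lambda\times\beta\Z)$ with $\Lambda$ locally finite the frame property is equivalent, after applying the Zak/Fourier transform in the frequency variable, to uniform invertibility (uniform upper and lower bounds on the singular values) of the family of infinite matrices
$$
M_x = \bigl( g(x + \lambda_n - \tfrac{k}{\beta}) \bigr)_{n\in\Z,\, k\in\Z}, \qquad x\in[0,\tfrac1\beta),
$$
acting between $\ell^2(\Z)$ spaces; this is the appropriate analogue of the Ron--Shen criterion used in the lattice case, and the hypothesis $\alpha\beta\le 1$ together with $\lambda_{n+1}-\lambda_n\le\alpha$ guarantees that $\Lambda$ is $\frac{\alpha}{2}$-dense, so that the "columns" of $M_x$ are spaced finely enough to cover $\R$ with overlaps, which is what produces the lower frame bound. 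The condition $\lambda_{n+m}-\lambda_n\ge1$ ensures local finiteness is quantitative (only boundedly many $\lambda_n$ in any unit interval), which gives the upper bound and keeps the matrices in a bounded family.

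Next I would set up the diagonal dominance. Because $g$ is supported on $\R_+$ and stably decreasing, in each column $k$ of $M_x$ the entries $g(x+\lambda_n - k/\beta)$ vanish for $\lambda_n < k/\beta - x$ and, once positive, decay geometrically in $\lambda_n$ with ratio controlled by $q(\cdot)$; symmetrically, in each row $n$ the entries decay geometrically as $k\to -\infty$ and vanish for $k$ too large. So $M_x$ is, up to reindexing, close to lower-triangular with a geometrically decaying tail. I would pick, for each column $k$, the distinguished row $n(k)$ to be the smallest $n$ with $\lambda_n \ge k/\beta - x$ (the "top" nonzero entry), using $\lim_{x\to\alpha^-}g(x)>0$ and $\lambda_{n+1}-\lambda_n\le\alpha$ to bound that entry below by a positive constant independent of $x$ and $k$ — this is exactly where the hypothesis $\lim_{x\to\alpha^-}g(x)>0$ enters, just as the condition $\alpha\le x_0$ did in part (2) of Theorem \ref{mainth}. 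Then the off-diagonal mass in that row and column is a geometric series with ratio bounded away from $1$ (uniformly, via compactness of $[0,\tfrac1\beta)$ and the $m$-separation), and the modified diagonal dominance theorem invoked in the proof of Theorem \ref{mainth} applies to give uniform invertibility of $M_x$.

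The main obstacle I anticipate is purely bookkeeping rather than conceptual: in the lattice case the matrix $M_x$ has a clean $\Z\times\Z$ structure with a single geometric ratio $q(\alpha)$ and $q(1/\beta)$ governing the two directions, whereas here the irregular spacing $\{\lambda_n\}$ means the decay ratio between consecutive rows is $q(\lambda_{n+1}-\lambda_n)$, which varies with $n$. One must therefore check that $\sup_n q(\lambda_{n+1}-\lambda_n) < 1$ and that the partial products $\prod q(\lambda_{n+1}-\lambda_n)$ over any window still decay geometrically — this follows because $\lambda_{n+m}-\lambda_n\ge 1$ forces, in any block of $m$ consecutive gaps, at least one gap of size $\ge 1/m$, hence $q$ of it is $\le q(1/m) < 1$, so the product over $m$ steps is bounded by a fixed $\rho<1$. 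Feeding this geometric-over-blocks estimate into the diagonal dominance bound, and checking the upper frame bound analogously (finitely many nonzero terms per row up to geometric tails, with row-count bounded by the $m$-separation), completes the argument. I would also note that, unlike Theorem \ref{mainth}, only the "$\lim_{x\to\alpha^-}g(x)>0$" regime is treated here, so no separate borderline $x_0$ analysis is needed.
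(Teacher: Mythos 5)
Your overall strategy --- fiberize via a Ron--Shen-type criterion for $\Lambda\times\beta\Z$ and then run a dominance argument on the resulting matrices --- is the same as the paper's, and your choice of one distinguished row per column (the smallest $\lambda_n$ with nonzero entry in that column, bounded below by $\lim_{x\to\alpha^-}g(x)$ thanks to $\lambda_{n+1}-\lambda_n\le\alpha\le 1/\beta$) matches the paper exactly. But there is a genuine gap at the decisive step: you claim the modified diagonal dominance theorem applies to (the selected rows of) $M_x$ because the off-diagonal entries ``decay geometrically with ratio bounded away from $1$''. That is not enough. Theorem \ref{domth} requires $\sum_{m\neq n}|d_{n,m}|\le \lambda\,|d_{n,n}|$ with $\lambda<1$, whereas for the raw matrix the off-diagonal row sum is $\sum_{l\ge 1} g(\tau+l/\beta)$, which is only bounded by $g(\tau)\,q/(1-q)$ with $q=q(1/\beta)$; as soon as $q\ge 1/2$ (e.g. $g(x)=e^{-\eps x}\chi_{[0,\infty)}(x)$ with $\eps$ small) this exceeds the diagonal entry $g(\tau)$, so no row dominance holds and the dominance theorem cannot be invoked. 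The paper's proof hinges on an algebraic step you omit: multiplying $G_x$ on the right by $U=\mathrm{Id}-q(1/\beta)S$, i.e. replacing each column by itself minus $q(1/\beta)$ times the adjacent one. Stable decrease then makes every off-diagonal entry of $G_xU$ nonpositive with row sums of absolute values at most $q(1/\beta)$ times the positive, uniformly bounded-below diagonal entry --- exactly the hypothesis of Theorem \ref{domth} --- and since $U$ is boundedly invertible the lower bound transfers back to $G_x$. Without this differencing trick (or a substitute) your argument does not close for general stably decreasing $g$.

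Two secondary points. First, the Ron--Shen-type criterion you ``recall'' is not the standard lattice criterion; for $\Lambda\times\beta\Z$ it has to be established (the paper proves it as Theorem \ref{newron}, via the frame operator and Poisson summation), though this part is routine fiberization. Second, your bookkeeping about the varying ratios $q(\lambda_{n+1}-\lambda_n)$ and block products over $m$ consecutive gaps is largely beside the point in the paper's scheme: for the lower bound all rows other than the distinguished ones are simply discarded, so only $q(1/\beta)$ enters the dominance estimates; the separation hypothesis $\lambda_{n+m}-\lambda_n\ge 1$ is used only for the upper frame bound.
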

For the case of the one-sided exponential $f(x) = e^{-x}\chi_{[0, \infty)}(x)$ we also know that these conditions are necessary for the system $\mathcal{G}(g;\Lambda\times \beta\mathbb{Z})$ to be a frame, see \cite{BKL2}. The proof of Theorem \ref{irrth} is based on a new Ron--Shen type criterion (Theorem \ref{newron}) that we prove.

\section{Proof of Theorem \ref{mainth}}
We begin by recalling the Ron--Shen criterion for the Gabor system to be a frame (see, e.g., \cite[Proposition 6.3.4]{Gro}).
\begin{theorem}
Let $g\in L^2(\R)$ and consider the infinite matrix $$G_x = \left\{g\left(x - \alpha n + \frac{m}{\beta}\right)\right\}_{m,n\in\mathbb{Z}}.$$ The system $\mathcal{G}(g;\alpha,\beta)$ is a frame if and only if there are $A, B > 0$ such that for almost all $x\in \R$ and all $v\in \ell^2(\Z)$ we have
\begin{equation}
A||v||_2 \le ||G_x v||_2 \le B||v||_2.
\label{meq}
\end{equation}
\label{RS}
\end{theorem}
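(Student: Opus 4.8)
The plan is to diagonalise the analysis operator of $\mathcal G(g;\alpha,\beta)$ along the lattice $\tfrac1\beta\Z$ and then to localise the resulting norm estimate fibre by fibre. For $h\in L^2(\R)$ and $x\in[0,\tfrac1\beta)$ set $\mathbf h(x):=\big(h(x+\tfrac m\beta)\big)_{m\in\Z}\in\ell^2(\Z)$. Since $\|h\|_2^2=\int_0^{1/\beta}\|\mathbf h(x)\|_{\ell^2}^2\,dx$ and every element of $L^2\big([0,\tfrac1\beta);\ell^2(\Z)\big)$ is of the form $\mathbf h$ for exactly one $h\in L^2(\R)$, the map $h\mapsto\mathbf h$ is a unitary identification of $L^2(\R)$ with $L^2\big([0,\tfrac1\beta);\ell^2(\Z)\big)$.

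The first step is the Ron--Shen identity
$$\sum_{m,n\in\Z}\big|(h,\pi_{\alpha m,\beta n}g)\big|^2=\frac1\beta\int_0^{1/\beta}\big\|G_x\,\mathbf h(x)\big\|_{\ell^2}^2\,dx ,$$
valid for every $h\in L^2(\R)$, both sides allowed to be $+\infty$. To obtain it I would write $(h,\pi_{\alpha m,\beta n}g)=\int_\R h(x)\,\overline{g(x-\alpha m)}\,e^{-2\pi i\beta nx}\,dx$, split $\R$ into the intervals $[\tfrac k\beta,\tfrac{k+1}\beta)$, and use $e^{-2\pi i\beta nk/\beta}=1$ to get $(h,\pi_{\alpha m,\beta n}g)=\int_0^{1/\beta}\Phi_m(x)\,e^{-2\pi i\beta nx}\,dx$, where $\Phi_m(x)=\sum_{k\in\Z}h(x+\tfrac k\beta)\,\overline{g(x+\tfrac k\beta-\alpha m)}$. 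By Cauchy--Schwarz $\Phi_m\in L^1\big([0,\tfrac1\beta)\big)$, so Parseval for the orthonormal system $\{\sqrt\beta\,e^{2\pi i\beta n\cdot}\}_{n\in\Z}$ of $L^2\big([0,\tfrac1\beta)\big)$ gives $\sum_n|(h,\pi_{\alpha m,\beta n}g)|^2=\tfrac1\beta\int_0^{1/\beta}|\Phi_m(x)|^2\,dx$; summing over $m$ (Tonelli, all terms nonnegative) and noting, by a direct reindexing, that $\sum_m|\Phi_m(x)|^2=\|G_x\mathbf h(x)\|_{\ell^2}^2$ yields the identity.

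Combining the identity with the unitary identification, $\mathcal G(g;\alpha,\beta)$ is a frame if and only if there are $A,B>0$ (after absorbing $\tfrac1\beta$ into the constants) such that
$$A\int_0^{1/\beta}\|\mathbf f(x)\|^2\,dx\ \le\ \int_0^{1/\beta}\|G_x\mathbf f(x)\|^2\,dx\ \le\ B\int_0^{1/\beta}\|\mathbf f(x)\|^2\,dx$$
for every $\mathbf f\in L^2\big([0,\tfrac1\beta);\ell^2(\Z)\big)$. The implication from the pointwise bounds \eqref{meq} to this integrated inequality is immediate: substitute $v=\mathbf f(x)$ and integrate. For the converse I would argue by contradiction. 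Each entry of $G_x$ is a translate of $g$ evaluated at an affine function of $x$, so $x\mapsto\|G_xv\|$ is measurable for every finitely supported $v$; running $v$ over a countable dense set of unit vectors of $\ell^2(\Z)$ supported on finitely many coordinates, the functions $x\mapsto\inf_{\|v\|=1}\|G_xv\|$ and $x\mapsto\|G_x\|$ are measurable. If the lower (resp.\ the upper) pointwise bound failed on a set $E\subset[0,\tfrac1\beta)$ of positive measure, a measurable selection of near-extremal unit vectors $v_x$ for $x\in E$, extended by $0$ off $E$, would produce an $\mathbf f$ contradicting the integrated lower (resp.\ upper) bound. This is the whole argument.

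The only genuinely non-routine point is this last measurable-selection step (a fibering argument); everything before it is periodisation, Parseval, Tonelli, and the factor-of-$\tfrac1\beta$ bookkeeping. The detail worth spelling out inside it is the truncation to finitely many coordinates, which is what makes the selection $x\mapsto v_x$ honestly measurable and $\ell^2(\Z)$-valued; note also that, once the pointwise bounds are in force, the upper one in particular guarantees that $G_x$ is a bounded operator for a.e.\ $x$, so that \eqref{meq} is meaningful.
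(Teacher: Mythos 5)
Your proposal is correct, but it is worth saying how it sits relative to the paper: the paper does not prove Theorem \ref{RS} at all (it is quoted from Gr\"ochenig's book), and the closest argument in the text is the proof of the more general Theorem \ref{newron}, which takes a different route --- it writes out the frame operator $S$, applies the Poisson summation formula to the frequency sum, and identifies the fibers of $S$ as $M_x=\frac{1}{\beta}G_x^*G_x$, reading off the criterion from $A\,\mathrm{Id}\le M_x\le B\,\mathrm{Id}$. You instead periodize the coefficients $(h,\pi_{\alpha m,\beta n}g)$ directly and apply Parseval on $[0,\frac1\beta)$ to get the identity $\sum_{m,n}|(h,\pi_{\alpha m,\beta n}g)|^2=\frac1\beta\int_0^{1/\beta}\|G_x\mathbf{h}(x)\|^2\,dx$, which avoids the frame operator and any Poisson-summation justification and keeps everything at the level of nonnegative series (Tonelli, Parseval with both sides possibly infinite); this is essentially the classical Ron--Shen/Walnut fiberization, i.e.\ close in spirit to the cited source rather than to the paper's own computation. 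The step you rightly single out as the only non-routine one --- passing from the integrated inequality for all $\mathbf{f}\in L^2([0,\frac1\beta);\ell^2)$ back to the pointwise a.e.\ bounds --- is indeed the real content of the converse, and your sketch is sound: the selection needs no abstract theorem, since one can partition the bad set according to the first finitely supported unit vector in a fixed countable dense family that is near-extremal, and your remark that the upper bound must be secured first (so that $G_x$ is a bounded operator a.e.\ and \eqref{meq} is meaningful for arbitrary $v$) is exactly the right order of reasoning; note that the paper glosses the corresponding passage in Theorem \ref{newron} without comment, so you are more careful than the source here. Two harmless details you pass over: your $\Phi_m$ actually produces the entrywise conjugate of $G_x$, which changes nothing because $\|\overline{G_x}v\|=\|G_x\bar v\|$ and $v\mapsto\bar v$ is an isometric bijection of $\ell^2$; and bounds for a.e.\ $x\in[0,\frac1\beta)$ yield them for a.e.\ $x\in\R$ since $G_{x+1/\beta}$ is $G_x$ with its columns reindexed.
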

\subsection*{Proof of Theorem \ref{mainth}} First of all we notice that $g$ is non-increasing because $g(x)\ge g(x)q(t)\ge g(x+t)$ for all $x, t\ge 0$. Thus, $g$ is exponentially decreasing at infinity and bounded by $g(0)$. Therefore, the upper estimate in \eqref{meq} is obvious and so we will only focus on the lower estimate $A\|v\|_2\leq\|G_x v\|$.\smallskip

Recall that $\supp g\subset[0,\infty)$. Hence, the $n$-th row of $G_x$ has the following form
$$\begin{pmatrix}.&.&0&0 & 0& g\left(\tau_x\right) & g\left(\tau_x+\frac{1}{\beta}\right) & g\left(\tau_x+\frac{2}{\beta}\right)&.&.
\end{pmatrix},$$
where $\tau_x=x - \alpha n+\frac{m_n}{\beta}$, $m_n$ is the smallest integer such that $\tau_x\geq0$. Note that clearly $0 \le \tau_x < \frac{1}{\beta}$.
Since $\alpha\leq \frac{1}{\beta}$, we have $0\leq m_{n+1}-m_n\leq 1$. It may happen that $m_{n+1}=m_n$. These indices $n$ correspond to so-called "double" $ $rows. We note that for each $m\in \Z$ there exists $n\in\Z$ such that $m_n = m$ and $x - \alpha n + \frac{m_n}{\beta} < \alpha$. Indeed, pick any $n$ with $m_n = m$, if $x-\alpha n + \frac{m_n}{\beta} \ge \alpha$ then $m_{n-1}$ is still equal to $m$. We clearly can not indefinitely decrease $n$, hence sooner or later we will get that $x-\alpha n + \frac{m_n}{\beta} < \alpha$. Note that for each $m$ there is exactly one such $n$.

Let us remove all other rows from the matrix $G_x$. This can make the proof of the lower estimate in \eqref{meq} only harder.  Thus, it is sufficient to prove the lower estimate for the upper-triangular matrix $G^{\star}_x$,

$$G^{\star}_x=
\begin{pmatrix}
0&0&0 &  g\left(\tau_{x,n}\right) & g\left(\tau_{x,n}+\frac{1}{\beta}\right) & g\left(\tau_{x,n}+\frac{2}{\beta}\right)&.&.&.\\
0&0 & 0& 0 & g\left(\tau_{x,n+1}\right) & g\left(\tau_{x,n+1}+\frac{1}{\beta}\right) & g\left(\tau_{x,n+1}+\frac{2}{\beta}\right) &.&.\\
0&0&0&0 & 0& g\left(\tau_{x,n+2}\right) & g\left(\tau_{x,n+2}+\frac{1}{\beta}\right) & g\left(\tau_{x,n+2}+\frac{2}{\beta}\right) & .
\end{pmatrix},
$$
where the numbers $\tau_{x, n}$ are the ones corresponding to the remaining rows. Note that they are not equal to $x-\alpha n + \frac{m_n}{\beta}$ but to $x-\alpha k_n + \frac{m_{k_n}}{\beta}$ for some sequence $k_n\in \Z$.

\medskip

Let $q=q\left(\frac{1}{\beta}\right)$ be the number from \eqref{qineq}. Put $U={\rm{Id}}-qS$, where $S$ is the shift operator,
$$U={\rm{Id}}-qS,\quad U=\{b_{n,m}\},\quad b_{n,n}=1, b_{n,n+1}=-q, \quad b_{n,m}=0, m-n\notin\{0,1\}.$$
It is well-known that for $0<q<1$ the operator $U$ is invertible, $U^{-1}=\sum_{n=0}^\infty q^n S^n$. Hence, it is sufficient to prove that the matrix $G^{\star}_x U$ is invertible with a uniform (with respect to $x$) estimate for the norm of the inverse matrix $(G^{\star}_x U)^{-1}$. Let us consider the entries of the matrix $G^{\star}_x U=\{d_{n,m}\}$. We have 
$$d_{n,m}=0 \text { for } n < m,$$
$$d_{n,n} = g(\tau_{x,n}),$$
$$d_{n,m}= g\biggl{(}\tau_{x,n}+\frac{m-n}{\beta}\biggr{)}-q g\biggl{(}\tau_{x,n}+\frac{m-n-1}{\beta}\biggr{)}, \text{ for } m>n.$$
We have $d_{n,n}\ge 0$ and $d_{n,m}\le 0$, $m>n$, since function $g$ is stably decreasing. Therefore,  
$$\sum_{m=n+1}^\infty |d_{n,m}|=-\sum_{m=n+1}^\infty d_{n,m}=q g(\tau_{x,n})+\sum_{l=1}^\infty(q-1)g\biggl{(}\tau_{x,n}+\frac{l}{\beta}\biggr{)}$$
$$\leq q g(\tau_{x,n})=q d_{n,n}.$$

To finish the proof, we will use the following theorem which we prove in Section \ref{Dom}.
\begin{theorem} Let $D=\{d_{n,m}\}_{{n,m}\in\mathbb{Z}}$. Assume that for some $\delta > 0$, $0<\lambda<1$ and $C>0$ we have $|d_{n, n}|\ge \delta$, $|d_{n,m}|\leq C \lambda^{|m-n|}$ and 
$$\sum_{m\neq n}|d_{n,m}| \le \lambda |d_{n,n}|.$$
Then for some $\varepsilon = \eps(\delta, C, \lambda)>0$ we have $\|Dv\|\geq\varepsilon\|v\|$ for all $v\in\ell^2(\Z)$.
\label{domth}
\end{theorem}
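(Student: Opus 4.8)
The plan is to split $D = D_0 + R$ where $D_0$ is the diagonal part, $D_0 = \diag(d_{n,n})$, and $R$ collects all off-diagonal entries. A naive diagonal dominance argument would want $\|R\| < \inf_n |d_{n,n}|$, but we are only given $\inf_n |d_{n,n}| \ge \delta$ and no \emph{upper} bound on the diagonal, so $\|D_0^{-1}R\|$ need not be less than $1$ in operator norm; this is exactly why the ordinary diagonal dominance theorem does not apply and why the hypothesis is stated as $\sum_{m\ne n}|d_{n,m}| \le \lambda|d_{n,n}|$ with the \emph{same} $|d_{n,n}|$ on the right. So instead I would write $D = D_0(\id + D_0^{-1}R)$ and try to control $D_0^{-1}R$ not in operator norm but via a weighted/Schur-test estimate: the $(n,m)$ entry of $D_0^{-1}R$ is $d_{n,m}/d_{n,n}$, and summing the absolute values of the entries in row $n$ gives exactly $\sum_{m\ne n}|d_{n,m}|/|d_{n,n}| \le \lambda < 1$. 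That controls the $\ell^\infty \to \ell^\infty$ operator norm of $D_0^{-1}R$ by $\lambda$, but not the $\ell^2$ norm, since the column sums are not obviously controlled.

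To get an $\ell^2$ bound I would run a Schur test with an exponential weight. Set $w_n$ to be a slowly varying positive weight and estimate $\sum_m |d_{n,m}/d_{n,n}|\, w_m \le \lambda_1 w_n$ for the rows (using the row bound plus the decay $|d_{n,m}|\le C\lambda^{|m-n|}$ to absorb the weight ratio $w_m/w_n$, which costs a factor like $(1+o(1))$ if the weight varies on a scale much slower than the decay rate) and similarly $\sum_n |d_{n,m}/d_{n,n}|\, w_n \le \lambda_2 w_m$ for the columns. The row estimate is essentially given; the column estimate is where the hypotheses $|d_{n,n}|\ge\delta$ and $|d_{n,m}|\le C\lambda^{|m-n|}$ are used: a column sum of $|d_{n,m}|/|d_{n,n}| \le (C/\delta)\lambda^{|m-n|}$ converges to at most $(C/\delta)\cdot\frac{1+\lambda}{1-\lambda}$, and critically, for $|m-n|$ large this is much smaller than $1$, while for $|m-n|$ bounded there are only finitely many terms, each of which is at most $\lambda$ by the row hypothesis applied appropriately. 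Actually the cleanest route: split each off-diagonal block into "near" entries ($1 \le |m-n| \le N$) and "far" entries ($|m-n| > N$). The far part has operator norm $\le \frac{C}{\delta}\cdot\frac{2\lambda^{N+1}}{1-\lambda}$ by Schur's test with trivial weights (both row and column sums are bounded by a geometric tail), which is $< \tfrac{1}{4}$ for $N$ large depending on $C,\delta,\lambda$. The near part $R_N$ consists of finitely many diagonals; here I use that $\|D_0^{-1}R_N\|_{\ell^2\to\ell^2}$ is controlled by $\sqrt{\|\cdot\|_{1\to 1}\,\|\cdot\|_{\infty\to\infty}}$, the $\infty\to\infty$ norm (max row sum) being $\le\lambda$ by hypothesis, and the $1\to 1$ norm (max column sum) being $\le (2N+1)\cdot(C/\delta)$ — a finite constant. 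That is not yet less than $1$, so this crude bound is insufficient and I would instead interpolate more carefully, or better: choose the weight $w_n = \mu^{n}$ for $\mu$ close to $1$ so that the row sum of $D_0^{-1}R$ against the weight stays below, say, $\frac{1+\lambda}{2} < 1$ (the weight ratio $\mu^{m-n}$ over near entries is within $(1\pm\epsilon)$) while simultaneously the weighted column sum of the far part stays small; a single exponential weight with the right rate $\mu$ handles both the row bound (barely perturbed from $\lambda$) and forces the column bound down because the far entries get an extra decay factor. Running Schur's test with this single weight $w$ yields $\|D_0^{-1}R\|_{\ell^2\to\ell^2}\le \sqrt{\lambda_1\lambda_2} =: \theta < 1$.

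Given $\theta < 1$, the Neumann series shows $\id + D_0^{-1}R$ is invertible with $\|(\id+D_0^{-1}R)^{-1}\| \le \frac{1}{1-\theta}$, hence $D = D_0(\id + D_0^{-1}R)$ satisfies, for all $v\in\ell^2(\Z)$,
$$\|Dv\| \ge \frac{1}{\|(\id+D_0^{-1}R)^{-1}\|}\,\|D_0 v\| \ge (1-\theta)\,\delta\,\|v\|,$$
so we may take $\eps = (1-\theta)\delta$, with $\theta$ depending only on $\delta, C, \lambda$ through the choice of weight rate and truncation level $N$. The main obstacle is precisely the step of the previous paragraph: converting the given $\ell^\infty$-type (row) diagonal dominance into an $\ell^2$ bound without any upper bound on the diagonal entries, which is what makes this a genuine modification of the classical diagonal dominance theorem rather than a direct application; the exponential off-diagonal decay hypothesis $|d_{n,m}|\le C\lambda^{|m-n|}$ is what rescues the column sums and lets a weighted Schur test close the argument.
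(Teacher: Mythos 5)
There is a genuine gap: the pivotal intermediate claim of your plan, namely that one can prove $\|D_0^{-1}R\|_{\ell^2\to\ell^2}\le\theta<1$ (by a weighted Schur test or otherwise), is simply false under the stated hypotheses, so the Neumann-series step cannot be reached. The hypotheses control only the \emph{row} sums of $D_0^{-1}R$ by $\lambda$; the column sums are controlled only through $C/\delta$, and when $C/\delta$ is large the $\ell^2$ norm of $D_0^{-1}R$ genuinely exceeds $1$, so no choice of weights can save the Schur test (a Schur test only gives upper bounds, and the true norm is large). Concretely, fix $0<\lambda<1$, let $w\in\N$ be large, and set $d_{n,n}=1$ for all $n$, $d_{n,0}=\lambda$ for $1\le|n|\le w$, and all other entries $0$. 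Then $\delta=1$, the row dominance $\sum_{m\ne n}|d_{n,m}|\le\lambda|d_{n,n}|$ holds exactly, and $|d_{n,m}|\le C\lambda^{|m-n|}$ holds with $C=\lambda^{1-w}$, yet $\|D_0^{-1}R\,e_0\|_2=\lambda\sqrt{2w}$, which is as large as you like. (This is not a counterexample to the theorem: here $D=\mathrm{Id}+R$ is identity plus a rank-one perturbation and is invertible, but with $\|D^{-1}\|\sim\sqrt{w}$, which is consistent because $\eps$ is allowed to degrade as $C\to\infty$; it does, however, rule out any argument that runs through $\|D_0^{-1}R\|<1$.) Your own discussion already senses the problem — the near-band column sums of size $(2N+1)C/\delta$ — but the proposed exponential weight $\mu^n$ with $\mu$ close to $1$ does nothing to tame column concentration inside the band, as the example shows.

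For comparison, the paper's proof never bounds the off-diagonal part in operator norm. After normalizing $d_{n,n}=1$, for a given vector $x$ it selects the set $J(\kappa)$ of indices $n$ with $|x_n|\ge\kappa^{|m-n|}|x_m|$ for all $m$, shows $\sum_{n\in J}|x_n|^2\ge c(\kappa)\sum_n|x_n|^2$, and then estimates $|(Dx)_n|$ only for $n\in J$ by the triangle inequality: the entries with $0<|m-n|\le n_0$ are handled by the row-dominance hypothesis (costing a factor $\kappa^{-n_0}$), and the entries with $|m-n|>n_0$ by the exponential bound $C\lambda^{|m-n|}$ against $|x_m|\le\kappa^{-|m-n|}|x_n|$; choosing first $n_0$ and then $\kappa$ close to $1$ makes the total loss $q<1$, giving $|(Dx)_n|\ge(1-q)|x_n|$ on $J$ and hence the lower bound with $\eps$ depending on $\delta,C,\lambda$. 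The essential difference is that this is a per-vector, per-row estimate localized to the dominant coordinates of $x$, which is exactly what circumvents the failure of any operator-norm smallness for the off-diagonal part. To repair your write-up you would need to replace the Schur/Neumann step by an argument of this kind (or otherwise exploit the structure of $\mathrm{Id}+D_0^{-1}R$ beyond a norm bound).
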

We will apply this theorem with $\lambda = q(\frac{1}{\beta})$ and $C = \frac{g(0)}{q(\frac{1}{\beta})}$. The domination condition we already verified above. We have $g(x) \le g\left(\frac{[x\beta]}{\beta}\right)\le g(0) q(\frac{1}{\beta})^{[x\beta]}$, so the uniform exponential decay is also verified. Finally, we need a uniform lower bound $|d_{n, n}|\ge \delta > 0$. For this we recall that $d_{n, n} = g(\tau_{x, n})$ and that $0 \le \tau_{x, n} < \alpha$. Thus, if the support of $g$ is $\R_+$ then it is always at least $g(\alpha) > 0$, if the support of $g$ is $[0, x_0]$ and $\alpha < x_0$ then it is also at least $g(\alpha) > 0$ and if the support of $g$ is $[0, x_0]$ and $\alpha = x_0$ then it is at least $\lim_{x\to x_0-} g(x)$. So, we showed that all pairs $(\alpha, \beta)$ stated in Theorem \ref{mainth} give us a frame. It remains to show that all other pairs do not give us a frame.

If $\alpha\beta > 1$ then the system is never a frame. If $\alpha > x_0$ then the supports of the functions in $\mathcal{G}(g;\alpha, \beta)$ do not cover the whole $\R$ so this system is not even complete in $L^2(\R)$. Finally, if $\alpha = x_0$ and $\lim_{x\to x_0-} g(x)=0$ then one can check that the lower bound in \eqref{frameform} is violated for the functions $\chi_{[x_0-\eps, x_0]}$ as $\eps \to 0$. 

\qed

\smallskip

It seems that Theorem \ref{domth} is known in some form. If in addition we also have column dominance then this result can be found in \cite[Lemma 6.5.4]{Gro}. For the reader's convenience, we will give a proof of Theorem \ref{domth} in Section \ref{Dom}. It is interesting to note that without some uniform bound like $|d_{n,m}|\leq C \lambda^{|m-n|}$ Theorem \ref{domth} is no longer true.

\subsection{Proof of Corollary \ref{Fcor}} Let us consider the Fourier transform of $g$, for $\xi\geq0$ we have
$$\hat{g}(\xi)=\int_{\mathbb{R}}g(t)e^{-2\pi i \xi t} dt= \int_{a}^\infty e^{-2\pi\xi t }d\mu(t).$$
On the other hand, $\hat{g}(\xi)=0$ for $\xi<0$. So, the function $\hat{g}$ satisfies all of the assumptions of Theorem \ref{mainth} with $q(t) = e^{-2\pi t a}$. In addition, the Gabor system $\mathcal{G}(g;\alpha,\beta)$ forms a frame if and only if the Gabor system $\mathcal{G}(\hat{g};\beta,\alpha)$ forms a frame. Hence, we get the result. \qed
\section{Proof of Theorem \ref{irrth}}
\subsection{Ron--Shen type matrix} We begin by establishing the following theorem which gives us a criterion for the system $\mathcal{G}(g;\Gamma)$ to be a frame if $\Gamma$ is $(0,\beta)$-invariant. Note that $\Gamma = \Lambda\times \beta\Z$ satisfies this assumption.
\begin{theorem}\label{newron}
Let $\beta > 0$, $g\in L^2(\R)$, $\Lambda\subset \R$ and $c_\lambda \in \R, \lambda\in\Lambda$ be a sequence of numbers. Put 
$$\Gamma = \cup_{\lambda\in\Lambda}\{\lambda\}\times (\beta\Z +c_\lambda)\subset\R^2.$$
For $x\in\R$ consider the infinite matrix
$$G_x = \left\{g\left(x+ \frac{n}{\beta} - \lambda\right)e^{2\pi i c_\lambda \frac{n}{\beta}}\right\}_{\lambda\in\Lambda, n\in\Z}.$$
The system $\mathcal{G}(g;\Gamma)$ is a frame for $L^2(\R)$ if and only if there exist $A, B > 0$ such that for almost all $x\in \R$ and all $v\in \ell^2(\Z)$ we have
$$\frac{A}{\beta}\|v\|_{\ell^2(\Z)}^2\le \|G_x v\|_{\ell^2(\Lambda)}^2\le \frac{B}{\beta}\|v\|_{\ell^2(\Z)}^2.$$
Moreover, the best possible $A$ and $B$ are respectively the best lower and upper frame bounds for the system $\mathcal{G}(g;\Gamma)$.
\end{theorem}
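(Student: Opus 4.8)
The plan is to establish Theorem \ref{newron} as a generalization of the classical Ron--Shen criterion (Theorem \ref{RS}), and the natural route is to reduce the general $\beta$-invariant case to the lattice case already recorded, or failing that, to redo the standard Zak-transform/periodization computation directly. First I would observe that the system $\mathcal{G}(g;\Gamma)$ is a union over $\lambda\in\Lambda$ of the ``columns'' $\{\pi_{\lambda, \beta n + c_\lambda}g\}_{n\in\Z}$, and that the modulations $\beta n + c_\lambda$ run through a coset of $\beta\Z$. The frame operator inequality \eqref{frameform} for $\mathcal{G}(g;\Gamma)$ reads
$$A\|h\|^2 \le \sum_{\lambda\in\Lambda}\sum_{n\in\Z} |(h, \pi_{\lambda, \beta n + c_\lambda}g)|^2 \le B\|h\|^2,$$
and the key identity is that for fixed $\lambda$ the inner sum over $n$ is a periodization: writing $h_\lambda(x) = h(x)\overline{g(x-\lambda)}$, one has $(h,\pi_{\lambda,\beta n + c_\lambda}g) = \int h(x)\overline{g(x-\lambda)} e^{-2\pi i(\beta n + c_\lambda)x}\,dx$, which up to the unimodular factor $e^{-2\pi i c_\lambda x}$ is the $n$-th Fourier coefficient (at scale $\beta$) of the $1/\beta$-periodization of $h_\lambda$. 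By Parseval on the circle $\R/(\beta^{-1}\Z)$, summing $|(h,\pi_{\lambda,\beta n+c_\lambda}g)|^2$ over $n\in\Z$ equals $\beta^{-1}\int_0^{1/\beta} \bigl|\sum_{k\in\Z} h(x+k/\beta)\overline{g(x+k/\beta-\lambda)} e^{2\pi i c_\lambda(x+k/\beta)}\bigr|^2\,dx$ — here the phase $e^{2\pi i c_\lambda k/\beta}$ is exactly the factor appearing in the matrix $G_x$.

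Next I would introduce, for a.e. $x\in[0,1/\beta)$, the vector $v^{(x)} = \{v^{(x)}_k\}_{k\in\Z}$ with $v^{(x)}_k = h(x+k/\beta)$, which lies in $\ell^2(\Z)$ for a.e. $x$ with $\|h\|^2 = \int_0^{1/\beta}\|v^{(x)}\|_{\ell^2(\Z)}^2\,dx$, and recognize that the inner quantity $\sum_k h(x+k/\beta)\overline{g(x+k/\beta-\lambda)}e^{2\pi i c_\lambda(x+k/\beta)}$ is precisely the $\lambda$-component of $\overline{G_x}\, v^{(x)}$ up to an overall $\lambda$-independent unimodular factor $e^{2\pi i c_\lambda x}$ that does not affect the modulus. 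Summing over $\lambda$ then gives
$$\sum_{\lambda,n}|(h,\pi_{\lambda,\beta n+c_\lambda}g)|^2 = \frac{1}{\beta}\int_0^{1/\beta} \|G_x v^{(x)}\|_{\ell^2(\Lambda)}^2\,dx,$$
while $\|h\|^2 = \int_0^{1/\beta}\|v^{(x)}\|_{\ell^2(\Z)}^2\,dx$. From this the equivalence is immediate in one direction: the pointwise bounds $\frac{A}{\beta}\|v\|^2 \le \|G_x v\|^2 \le \frac{B}{\beta}\|v\|^2$ integrate to the frame bounds. For the converse, given the frame bounds one recovers the pointwise lower bound by the standard localization argument: if the pointwise lower bound failed on a positive-measure set of $x$ with a witness direction, one builds an $h$ concentrated near that set (using that $x\mapsto v^{(x)}$ can be prescribed by choosing $h$ on the fibers $x+\beta^{-1}\Z$) violating the frame inequality; measurability of the relevant infimum/supremum in $x$ needs a remark but is routine. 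The ``moreover'' clause about optimal constants falls out of these being genuine equalities rather than estimates.

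The main obstacle I anticipate is not the algebra but two technical points: first, ensuring the periodization manipulations are valid, i.e. that $\sum_k |h(x+k/\beta)|\,|g(x+k/\beta-\lambda)|$ converges for a.e. $x$ and that Fubini applies when interchanging the sum over $n$, the integral over the period, and the sum over $\lambda\in\Lambda$ — this is standard when $g\in L^2$ and in our application $g$ is bounded with exponential decay, but in the stated generality one should argue via the dense class of nice $h$ (e.g. bounded, compactly supported) and then pass to the limit. Second, the converse direction requires that the assignment $h\mapsto (v^{(x)})_{x\in[0,1/\beta)}$ be essentially surjective onto measurable fields of $\ell^2(\Z)$ vectors with integrable squared norm, so that a pointwise failure of the matrix inequality can be promoted to a genuine $L^2(\R)$ counterexample; this is the usual measurable-selection step in proofs of the Ron--Shen criterion and I would cite or mimic \cite[Proposition 6.3.4]{Gro}. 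Once these are in place, Theorem \ref{newron} is proved, and in the special case $\Lambda = \alpha\Z$, $c_\lambda \equiv 0$ it collapses to Theorem \ref{RS} after the change of variable $x \mapsto \beta x$, which serves as a consistency check.
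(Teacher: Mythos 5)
Your proposal is correct and follows essentially the same route as the paper: both fiberize over the cosets $x+\beta^{-1}\Z$ and reduce the frame property to uniform two-sided bounds for $G_x$, with the same measurable-selection caveat in the converse direction that the paper also leaves implicit. The only difference is cosmetic: you evaluate the quadratic form $\sum_{\lambda,n}|\langle h,\pi_{\lambda,\beta n+c_\lambda}g\rangle|^2$ by Parseval applied to the $\beta^{-1}$-periodization of $h(\cdot)\overline{g(\cdot-\lambda)}e^{-2\pi i c_\lambda \cdot}$, whereas the paper computes the frame operator itself via Poisson summation and identifies the fiber matrix $M_x=\frac{1}{\beta}G_x^*G_x$; taking $\langle Sf,f\rangle$ in the paper's formula yields exactly your identity (your sign slip in the phase $e^{2\pi i c_\lambda(x+k/\beta)}$ only conjugates $G_x$ and does not affect the bounds).
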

\begin{proof}
We begin by considering the frame operator for $f\in L^2(\R)$
$$Sf = \sum_{n, \lambda} \langle f, \pi_{(\lambda, \beta n + c_\lambda)}g\rangle \pi_{(\lambda, \beta n + c_\lambda)}g.$$
The system $\mathcal{G}(g;\Gamma)$ is a frame if and only if $A{\rm Id}\le S \le B{\rm Id}$ for some $A, B>0$. We begin by writing $S$ explicitly
$$Sf(x) = \sum_{\lambda\in\Lambda}g(x-\lambda)e^{2\pi i c_\lambda x} \sum_{n\in\Z}e^{2\pi i \beta n x}\int_\R f(t)e^{-2\pi i (\beta n + c_\lambda)t}\bar{g}(t-\lambda)dt.$$
We notice that the integral is the value of the Fourier transform of the function $$h_\lambda(t)=f(t)e^{-2\pi i c_\lambda t}\bar{g}(t-\lambda)$$ at the point $\beta n$. Since the factor before it is an exponential $e^{2\pi i\beta n x}$, we can apply the Poisson summation formula to the inner sum and get
\begin{align*}Sf(x) = \frac{1}{\beta}\sum_{\lambda\in\Lambda}g(x-\lambda)e^{2\pi i c_\lambda x} \sum_{n\in\Z}h_\lambda\left(x+\frac{n}{\beta}\right) =\\ \frac{1}{\beta}\sum_{n\in\Z}f\left(x+\frac{n}{\beta}\right)\sum_{\lambda\in\Lambda}g(x-\lambda)\bar{g}\left(x+\frac{n}{\beta}-\lambda\right)e^{-2\pi i c_\lambda \frac{n}{\beta}}.
\end{align*}
We notice that for each $x\in \R$ the value $f(x)$ appears only with the values $f(y)$ for $y=x+\frac{s}{\beta},s\in\Z$. So, we have the frame property if and only if for almost all $x$ the matrix 
$$M_x (s, l) = \frac{1}{\beta}\sum_{\lambda\in\Lambda} g\left(x+\frac{s}{\beta}-\lambda\right)\bar{g}\left(x+\frac{l}{\beta}-\lambda\right)e^{-2\pi i c_\lambda \frac{l-s}{\beta}}$$
satisfies $A{\rm Id}\le M_x \le B{\rm Id}$, where we did the change of variables $n = l-s$. It remains to notice that
$$M_x = \frac{1}{\beta}G_x^*G_x,$$
so $M_x$ satisfies the required bounds if and only if $G_x$ satisfies our assumption and we also get the claim about the upper and lower frame bounds.
\end{proof}

\subsection{Proof of Theorem \ref{irrth}} We will apply Theorem \ref{newron} to the set $\Gamma = \Lambda\times \beta\Z$ so that $c_\lambda = 0$ for all $\lambda\in\Lambda$. We get the matrix
$$G_x = \left\{g\left(x-\lambda + \frac{k}{\beta}\right)\right\}_{\lambda\in \Lambda, k\in \Z}$$
which we think of as an operator from $\ell^2(\Z)$ to $\ell^2(\Lambda)$ and we want to show that it satisfies $$\frac{A}{\beta}\|v\|_{\ell^2(\Z)}^2\le \|G_x v\|_{\ell^2(\Lambda)}^2\le \frac{B}{\beta}\|v\|_{\ell^2(\Z)}^2,\quad A, B>0.$$
 Again, we will only focus on the lower bound as the upper bound follows from the rapid decay of the function $g$ and the separation of the sequence $\Lambda$. Just like in the proof of Theorem \ref{mainth}, we multiply $G_x$ by the matrix $U = {\rm{Id}}-q(\frac{1}{\beta})S$. In this way we will again get a matrix $D_x$ with the entries
$$d_{\lambda, n} = g\left(x - \lambda +\frac{n+1}{\beta}\right) - q\left(\frac{1}{\beta}\right)g\left(x-\lambda + \frac{n}{\beta}\right).$$ 
In the matrix $D_x$ the first element in each row is positive while each other element is negative, and the sum of negative elements in each row is bounded from below by $-q\left(\frac{1}{\beta}\right)$ times the positive element in this row.

To apply Theorem \ref{domth} we have to show that if we throw away some rows from the matrix $D_x$ it will become upper-triangular. That is, we have to show that for each column $n$ in the matrix $G_x$ there is at least one row for which the first element appears exactly in this column and it is uniformly bounded from below by $\lim_{x\to \alpha^{-}}g(x)$, meaning that there exists $\lambda\in \Lambda$ such that $\alpha > x-\lambda + \frac{n}{\beta} \ge 0 > x-\lambda + \frac{n-1}{\beta}$. This is  true because the differences between the consecutive elements of $\Lambda$ are at most $\alpha \le \frac{1}{\beta}$.\qed
\section{Dominance theorem\label{Dom}}
Let $D$ be a matrix satisfying the assumptions of Theorem \ref{domth}. Dividing each corresponding row by $d_{n, n}$ we can without loss of generality assume that $d_{n,n}=1$, $n\in\mathbb{Z}$ (note that this might potentially increase $C$). Let $\{x_n\}_{n}$ be an arbitrary vector from $\ell^2(\mathbb{Z})$. For any number $\kappa<1$ put
$$J=J(\kappa)=\{n\in\mathbb{Z}: |x_n|\geq \kappa^{|m-n|}|x_m| \text{ for any } m\}.$$
We have 
$$\biggl{(}1+\sum_{l\neq0}\kappa^{2|l|}\biggr{)} \times \sum_{n\in J}|x_n|^2\geq \sum_{n\in\mathbb{Z}}|x_n|^2.$$
This is so because for every $n\in\N$ there exists $m\in J$ such that $|x_n|\le \kappa^{|n-m|}|x_m|$. If $n\in J$ then this is obviously true with $m=n$, otherwise we can find some $m_1$ such that $|x_{n}|\le\kappa^{|n-m_1|}|x_{m_1}|$. If $m_1\in J$ then $m = m_1$ works. Otherwise, we will continue in this way constructing a sequence $m_1, m_2, \ldots $ of numbers all of which satisfy $|x_{m_k}|\le\kappa^{|m_k-m_{k+1}|}|x_{m_{k+1}}|$ and $m_k\neq m_{k+1}$. This would in particular imply that $|x_{m_k}|\kappa^k\ge |x_n|$ so the sequence $x$ is not bounded, which contradicts it being in $\ell^2(\Z)$. Therefore, at some point we have to stop at some $m_k\in J$ and $m = m_k$ works.

So, it is sufficient to prove that for some $\kappa<1$ and $\eps > 0$
$$\sum_{n\in J}|x_n+\sum_{m\neq n}d_{n,m}x_m|^2\ge\eps\sum_{n\in J}|x_n|^2.$$
For any $n\in J$ and $n_0\in\mathbb{N}$ we have by the triangle inequality
$$|x_n+\sum_{m\neq n}d_{n,m}x_m|$$ 
$$\geq |x_n|-\sum_{0<|m-n|\le n_0}|d_{n,m}|\kappa^{-n_0}|x_n|-C\sum_{|m-n|> n_0}\frac{\lambda^{|m-n|}}{\kappa^{|m-n|}}|x_n|.$$

We want to pick $\kappa < 1$, $n_0 \in \N$ such that 
$$q(n_0,\kappa, C):=\lambda \kappa^{-n_0}+2C\sum_{l=n_0+1}^\infty\biggl{(}\frac{\lambda}{\kappa}\biggr{)}^l < 1.$$
When $\kappa\to 1$, $q(n_0, \kappa, C)$ tends to $\lambda + 2C\sum_{l=n_0+1}^\infty \lambda^l$. This expression in turn tends to $\lambda < 1$ when $n_0\to \infty$. So, we first pick $n_0$ so that this expression is less than $1$ and then pick $\kappa$ close enough to $1$ so that $q(n_0, \kappa, C)$ is still less than $1$. We get
 \begin{multline*}\|D x_n\|_2^2 = \sum_{n\in \Z}|x_n+\sum_{m\neq n}d_{n,m}x_m|^2\\ \ge \sum_{n\in J}|x_n+\sum_{m\neq n}d_{n,m}x_m|^2\ge (1-q(n_0, \kappa, C))^2\sum_{n\in J}|x_n|^2\\ \ge (1-q(n_0, \kappa, C))^2\biggl{(}1+\sum_{l\neq0}\kappa^{2|l|}\biggr{)}^{-1}\sum_{n\in \Z}|x_n|^2.
\end{multline*}
\qed
\section*{Acknowledgments}  Yurii Belov was supported by the RSF grant 24-11-00087. Aleksei Kulikov was supported by the VILLUM Centre of Excellence for the Mathematics of Quantum Theory (QMATH) with Grant No.10059. 
\smallskip

Yurii Belov is the winner of the “Leader” competition conducted by the Foundation for the Advancement of Theoretical Physics and Mathematics “BASIS” and would like to thank its sponsors and jury.

\end{document}